\theoremstyle{plain}
\newtheorem{theorem}{Theorem}
\newtheorem{lemma}[theorem]{Lemma}
\newtheorem{proposition}[theorem]{Proposition}
\theoremstyle{definition}
\title{Compact majority-minority districts almost never exist}
\author{Boris Alexeev \and Dustin G.\ Mixon\thanks{Department of Mathematics, The Ohio State University, Columbus, OH} \thanks{Translational Data Analytics Institute, The Ohio State University, Columbus, OH}}
\date{}
\begin{document}
\maketitle

\begin{abstract}
For a uniformly distributed population, we show that with high probability, any majority-minority voting district containing a fraction of the population necessarily exhibits a tiny Polsby--Popper score.
\end{abstract}

\section{Introduction}

Gerrymandering is the manipulation of voting district boundaries to favor a party or class.
One of the telltale signs of gerrymandering is a distorted shape, which can be quantified by some notion of geographic compactness.
For example, the \textbf{Polsby--Popper score} \cite{PolsbyP:91} of a district $D\subseteq\mathbb{R}^2$ is given by 
\[
\operatorname{PP}(D)
:=\frac{4\pi\cdot\operatorname{area}(D)}{\operatorname{length}(\partial D)^2},
\]
where $\partial D$ denotes the boundary of $D$.
(To ensure that $\operatorname{PP}(D)$ is well defined, we assume $D$ has rectifiable boundary.)
Inspired by the isoperimetric inequality~\cite{Osserman:78}, this scale-invariant score ranges from $0$ to $1$, with smaller scores being achieved by stranger shapes.

We say a district is \textbf{majority-minority} if the majority of its population belongs to a prescribed minority (be it political, racial, or otherwise).
In this paper, we show that under a simple model of population distribution, strange shapes are inevitable when drawing majority-minority districts.

\begin{theorem}
\label{thm.main result}
There exists a universal constant $c_0>0$ for which the following holds:
Given a state $S\subseteq\mathbb{R}^2$ with rectifiable boundary, put $\kappa:=\operatorname{PP}(S)$.
Let $\alpha\in(0,\frac{1}{2})$ denote the fraction of the population that belongs to a prescribed minority, and let $\beta\in(0,1)$ denote the fraction of the population that must reside in a given district.
If $n$ population locations are drawn independently and uniformly at random from $S$, then with probability\footnote{Here and throughout, $o_{\kappa,\alpha,\beta}(1)$ represents quantities that vanish when $\kappa$, $\alpha$, and $\beta$ are fixed and $n\to\infty$.} $1-o_{\kappa,\alpha,\beta}(1)$, it holds that every possible majority-minority district $D\subseteq S$ with rectifiable boundary that contains at least $\beta n$ of the population of $S$ necessarily satisfies
\[
\operatorname{PP}(D)
\leq c_0\cdot\frac{1}{1-2\alpha}\cdot\frac{\log^2n}{\sqrt{\beta n}}.
\]
\end{theorem}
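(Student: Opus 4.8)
The plan is to prove the contrapositive in a quantitative, union‑bound form. Write $\tau:=c_0(1-2\alpha)^{-1}\log^2 n/\sqrt{\beta n}$ and $\ell_0:=\sqrt{4\pi\operatorname{area}(S)/\tau}$. If $\operatorname{PP}(D)>\tau$ then $\operatorname{length}(\partial D)^2<4\pi\operatorname{area}(D)/\tau\le 4\pi\operatorname{area}(S)/\tau$, so it suffices to show that, with probability $1-o_{\kappa,\alpha,\beta}(1)$, no region $D\subseteq S$ with $\operatorname{length}(\partial D)<\ell_0$ captures at least $\beta n$ of the sampled points while having more than half of those points in the minority. Since $\operatorname{PP}$, the fractions of the population involved, and $\kappa$ are all scale‑invariant, I may normalize $\operatorname{area}(S)=1$. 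I will treat $\partial D$ as a curve (or finite union of curves) of total length $<\ell_0$, which is what the hypothesis of a rectifiable boundary should be taken to supply; controlling genuinely pathological boundaries is a point that needs care.

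The first move is to pass from $D$ to a combinatorial proxy. Overlay the axis‑aligned grid of side $\varepsilon$ (to be chosen of order $(\beta n)^{-1/2}$) and let $D'$ be the union of the cells on which $D$ occupies a majority of the available area. Using a relative isoperimetric inequality inside each cell, together with $\operatorname{length}(\partial D)<\ell_0$ and $\operatorname{length}(\partial S)=\sqrt{4\pi/\kappa}$ for the cells meeting $\partial S$, one shows that $D\triangle D'$ is covered by $m:=O\big((\ell_0+\operatorname{length}(\partial S))/\varepsilon\big)$ cells and that the boundary of $D'$ uses only $O(m)$ grid edges. Hence the number $N$ of grid regions $D'$ that can arise is at most $\exp\big(O(m\log\varepsilon^{-1})\big)$: such a region is determined by choosing its $O(m)$ boundary edges among the $O(\varepsilon^{-2})$ available and then declaring each bounded complementary piece in or out. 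A Chernoff bound over the $O(\varepsilon^{-2})$ cells gives that, with probability $1-o(1)$, every cell contains $O(\log n)$ sampled points, so $D\triangle D'$ contains $O(m\log n)=o(\beta n)$ of them; thus $D'$ still contains $(1-o(1))\beta n$ points, of which at least $\tfrac12\beta n-o(\beta n)$ are in the minority.

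Next I would condition on the point locations (so the per‑cell bounds hold) and exploit the randomness governing which of the $n$ sampled individuals belong to the minority. For a fixed grid region $D'$ containing $k$ sampled points, the number of minority members among them has mean $\alpha k$ and a Hoeffding‑type upper tail (it is binomial or hypergeometric), so the probability that it exceeds $\tfrac k2-o(\beta n)$ is at most $\exp(-2\Delta^2/k)$ with $\Delta=(\tfrac12-\alpha)k-o(\beta n)$. Because $\Delta$ and $\Delta^2/k$ are increasing in $k$, and a relevant $D'$ has $k\ge(1-o(1))\beta n$, this probability is at most $\exp\!\big(-c(1-2\alpha)^2\beta n\big)$ for $n$ large. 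A union bound over the $N$ grid regions then bounds the probability of a bad region by $o(1)+\exp\!\big(O(m\log\varepsilon^{-1})-c(1-2\alpha)^2\beta n\big)$. With $\varepsilon\asymp(\beta n)^{-1/2}$ one has $\log\varepsilon^{-1}=\Theta(\log n)$ and $m\asymp\sqrt{1-2\alpha}\,(\beta n)^{3/4}/(\sqrt{c_0}\,\log n)+\sqrt{\beta n/\kappa}$, so $O(m\log\varepsilon^{-1})=o\big((1-2\alpha)^2\beta n\big)$ once $n$ is large relative to $\kappa,\alpha,\beta$ (any fixed positive $c_0$ works — the $\log^2 n$ in $\tau$ is there to create slack), and the bound tends to $0$.

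The step I expect to be the crux is the discretization: one needs a clean geometric lemma converting a perimeter bound into "grid‑rounds to a region whose lattice boundary has $O(\ell_0/\varepsilon)$ edges," together with the matching count of such regions, and one must handle the part of $\partial D$ running along $\partial S$ and rule out boundaries that visit many cells while contributing little length. Everything after that is a balancing act in $\varepsilon$: small enough that grid‑rounding perturbs the point and minority counts by $o(\beta n)$, yet large enough that the number of candidate grid regions stays below $\exp\big(c(1-2\alpha)^2\beta n\big)$ — which is precisely why the threshold $\tau$ must carry the factor $(1-2\alpha)^{-1}$ and decay like $(\beta n)^{-1/2}$, and why the conclusion is only asymptotic in $n$.
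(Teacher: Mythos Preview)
Your outline is viable but follows a genuinely different route from the paper. The paper first proves a deterministic ``model-free'' theorem: if each unit precinct has minority population at most $a$, each inner precinct has majority surplus at least $b$, and each of the $k^2$ census blocks per precinct has population at most $c$, then every majority-minority district of population $d$ satisfies $\operatorname{PP}(D)\le 8\pi\cdot\tfrac{a+b}{b}\cdot\tfrac{k}{\sqrt{d/c}-2}$. The geometric core is an urn argument showing that $\partial D$ must split at least a $\tfrac{b}{a+b}$ fraction of the constituent precincts, combined with the elementary fact that a curve of length $L$ meets the interiors of $O(L)$ unit grid cells, so that $\operatorname{length}(\partial D)$ is forced to be large relative to the number of precincts $D$ touches and hence relative to $\operatorname{area}(D)$. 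The probabilistic half is then just three Chernoff bounds on a \emph{fixed} family of precincts and census blocks (after scaling so that precincts have area $1$ and $\operatorname{area}(S)=n/\log^4 n$), yielding $a\approx\alpha\log^4 n$, $b\approx(1-2\alpha)\log^4 n$, $c\approx\log n$, $k\approx\log^{3/2}n$. You instead run an $\varepsilon$-net argument: round each short-boundary region to one of $\exp(O(m\log\varepsilon^{-1}))$ grid shapes and union-bound the hypergeometric tail for the minority count over this finite class. The paper's route is more elementary, makes the mechanism transparent (the boundary of a majority-minority district is forced to behave like a space-filling curve through the precincts), and yields the deterministic intermediate theorem as a dividend; your route offloads all the geometry into the discretization lemma you correctly flag as the crux and is closer in spirit to a uniform-concentration bound over a low-complexity class of sets, which may adapt more readily if one replaces perimeter by some other complexity measure on $D$.
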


As one might expect, Theorem~\ref{thm.main result} is stronger in cases where $\alpha$ is smaller and $\beta$ is larger, but even if $\alpha$ is close to $\frac{1}{2}$ and $\beta$ is close to $0$, the Polsby--Popper score of a majority-minority district is doomed to be small when $n$ is large.
To simplify its statement, our theorem requires districts to contain a fixed fraction $\beta$ of the population (regardless of $n$).
This model is appropriate, for example, in the case of U.S.\ Congressional districts, which must contain at least $\beta=\frac{1}{435}$ of a state's population (regardless of the size of the total population) as a consequence of federal law.
However, one can modify our proof to allow for much smaller populations in $D$, e.g., $\log^5n$.

This paper can be viewed as the third installment in a series.
First, in~\cite{AlexeevM:18}, the authors proved an impossibility theorem for gerrymandering that exposed a fundamental tension between one person--one vote (a federal requirement that the districts in a given state have populations of the same size), Polsby--Popper compactness (a common requirement that the Polsby--Popper score of a voting district not be too small), and bounded efficiency gap (a measure of gerrymandering introduced in~\cite{StephanopoulosM:15} that compares vote share and seat share).
In~\cite{AlexeevM:18}, we optimized the proof for length, though at the price of clarity.
Next, we participated in the 2023 \textit{Summer of Math Exposition} contest by submitting a follow-up video~\cite{WaitingForZeno:23}.
In contrast to our original paper, our discussion in the video was optimized for clarity, and as part of this optimization, we discovered a new and improved proof of our impossibility theorem.
This new proof revealed how our result applies not only to partisan gerrymandering (as quantified by the efficiency gap), but also to any gerrymandering that benefits a minority population.
The current paper further builds on this improvement by upgrading the main result from ``there exists an arrangement of the population that suffices'' to ``almost all arrangements of the population suffice''.

In the next section, we follow the approach in our video~\cite{WaitingForZeno:23} to prove a model-free version of our main result using ideas from planar geometry and combinatorics.
Section~3 then proves Theorem~\ref{thm.main result} by combining the model-free result with our probabilistic model of the population locations.
This part of the proof follows from several applications of the Chernoff bound.
We conclude in Section~4 with a brief discussion.

\section{Model-free result}

In this section, we consider an infinite grid of square \textbf{precincts} in the plane.
Given a state in the plane, the precincts whose interiors are contained in the state are called \textbf{inner precincts}, and the precincts whose interiors intersect the state boundary are called \textbf{boundary precincts}.
We also partition each precinct into a $k\times k$ grid of square \textbf{census blocks} for some fixed choice of $k\in\mathbb{N}$.
What follows is a model-free version of our main result.

\begin{theorem}
\label{thm.model free}
Fix $a,b,c>0$, and suppose majority and minority populations are distributed in a state $S\subseteq\mathbb{R}^2$ in such a way that
\begin{itemize}
\item[(i)]
the minority population of each precinct is at most $a$,
\item[(ii)]
the majority population of each inner precinct is at least $b$ more than its minority population, and
\item[(iii)]
the total population of each census block is at most $c$.
\end{itemize}
Then any majority-minority district $D\subseteq S$ with rectifiable boundary that contains a population of $d>4c$ satisfies
\[
\operatorname{PP}(D)\leq 8\pi\cdot\frac{a+b}{b}\cdot\frac{k}{\sqrt{d/c}-2}.
\]
\end{theorem}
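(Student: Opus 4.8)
The plan is to bound $\operatorname{PP}(D)=4\pi\operatorname{area}(D)/\operatorname{length}(\partial D)^2$ by controlling the numerator and the denominator separately. Since the Polsby--Popper score is scale-invariant, I would first rescale so that each census block is a unit square, hence each precinct is a $k\times k$ square; write $A:=\operatorname{area}(D)$ and $L:=\operatorname{length}(\partial D)$. The three sub-goals are: (1) use the majority-minority hypothesis together with (i) and (ii) to bound $A$ by a multiple of the number of precincts that meet $\partial D$; (2) use an elementary ``a curve meets few cells'' estimate to turn this into $\operatorname{PP}(D)\le 8\pi\cdot\frac{a+b}{b}\cdot\frac kL$; and (3) use (iii) together with the isoperimetric inequality to show $L\ge\sqrt{d/c}-2$.

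For (1), let $\mathcal I$ be the precincts contained in $D$ and $\mathcal B$ the precincts that meet $D$ but are not contained in it. Since $D\subseteq S$, each $P\in\mathcal I$ is an inner precinct of $S$; since each precinct is connected, each $P\in\mathcal B$ meets $\partial D$; and $D$ is covered by $\mathcal I\cup\mathcal B$. I would sum, over all precincts $P$, the minority population of $P\cap D$ minus its majority population; this equals the difference between the minority and majority populations of $D$, which is positive because $D$ is majority-minority. Each $P\in\mathcal I$ contributes at most $-b$ by (ii), since then $P\cap D=P$ is an inner precinct; each $P\in\mathcal B$ contributes at most $a$, since by (i) its minority population is at most $a$ while its majority population is nonnegative. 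Hence $0<-b\,|\mathcal I|+a\,|\mathcal B|$, so $|\mathcal I|<\tfrac ab\,|\mathcal B|$, and therefore
\[
A\le k^2\big(|\mathcal I|+|\mathcal B|\big)<\frac{a+b}{b}\,k^2\,|\mathcal B| .
\]

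For (2), a rectifiable curve of length $L$ crosses the grid lines of the precinct grid at most $\sqrt2\,L/k$ times, so it meets at most about $2L/k$ precincts; thus $|\mathcal B|$ is at most about $2L/k$ and
\[
\operatorname{PP}(D)=\frac{4\pi A}{L^2}<\frac{4\pi\cdot\frac{a+b}{b}\,k^2\cdot(2L/k)}{L^2}=8\pi\cdot\frac{a+b}{b}\cdot\frac kL .
\]
For (3), by (iii) the district $D$ meets at least $d/c$ census blocks; all but the roughly $2L$ of them that meet $\partial D$ lie inside $D$ and contribute unit area, so $A$ is at least $d/c$ minus a term of order $L$. Feeding this into the isoperimetric inequality $A\le L^2/(4\pi)$ forces $L\ge\sqrt{d/c}-2$ once $d/c$ exceeds an absolute constant; and when $d/c$ is below that constant the asserted bound already exceeds $1\ge\operatorname{PP}(D)$, so there is nothing to prove. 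Substituting $L\ge\sqrt{d/c}-2$ into the previous display yields the theorem, with $d>4c$ ensuring the denominator is positive.

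The step I expect to be the main obstacle is carrying out the two ``a curve meets few cells'' estimates with constants sharp enough to produce exactly $8\pi$ and $\sqrt{d/c}-2$, and doing so in a way that survives boundaries with many connected components (equivalently, districts with many small holes). The last point is not a genuine difficulty---such configurations either force $L$ to be large or, when the holes carry negligible population, can be removed without spoiling the majority-minority property---but it needs to be addressed rather than ignored.
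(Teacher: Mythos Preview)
Your three–step decomposition matches the paper's exactly: bound $\operatorname{area}(D)$ by the number of constituent precincts, use the minority/majority imbalance (the paper packages your inequality $|\mathcal I|<\tfrac ab|\mathcal B|$ as an ``urn lemma'') to pass to split precincts, bound those by $\operatorname{length}(\partial D)$, and finally lower-bound $\operatorname{length}(\partial D)$ using the census-block hypothesis~(iii). The tactics inside steps~(2) and~(3) differ, however. For~(2), the paper avoids the crossing count and the connectedness worry you flag by a Buffon-style averaging: place one marked point of $\partial D$ in the interior of each split precinct; any unit-length arc of $\partial D$ fits in a unit square, hence meets the interiors of at most $4$ precincts, hence carries at most $4$ marks; averaging over a random unit arc yields $\#(\text{split precincts})\le 4\,\operatorname{length}(\partial D)$. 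This costs a factor of~$2$ relative to your ``about $2L/k$'' but is insensitive to the number of boundary components. For~(3), the paper does \emph{not} use area and the isoperimetric inequality; instead it bounds the perimeter by the diameter: a pigeonhole argument shows that $d$ points with at most $c$ per census block have diameter at least $\sqrt{d/c}-2$ in census-block units, and the length of $\partial D$ is at least twice the diameter of $D$, giving $L\ge 2(\sqrt{d/c}-2)$ directly, with no second cell-counting step and no small-$d/c$ case split. The factors of~$2$ cancel and both routes land on $8\pi$. Your isoperimetric route is in fact asymptotically stronger (by roughly $2\sqrt\pi$) and would give a better constant, at the price of the extra bookkeeping you anticipate.
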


Our proof of Theorem~\ref{thm.model free} makes use of several straightforward lemmas.

\begin{lemma}
\label{lem.urn}
Fix $a,b>0$, and consider $s$ urns, each containing
\begin{itemize}
\item[(i)] at most $a$ green balls, and
\item[(ii)] at least $b$ more orange balls than green balls.
\end{itemize}
Suppose we discard balls from $r$ of the urns in such a way that the total number of remaining green balls exceeds the total number of remaining orange balls. Then $r>\frac{b}{a+b}\cdot s$.
\end{lemma}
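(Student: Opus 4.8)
The plan is to prove Lemma~\ref{lem.urn} by a direct counting argument on the balls that survive the discarding process. Let me set up notation: say urn $i$ initially contains $g_i$ green balls and $o_i$ orange balls, so that $g_i \le a$ and $o_i \ge g_i + b$ for every $i \in \{1,\dots,s\}$. Suppose we discard balls only from a subset of $r$ urns, call it $R$; the remaining $s - r$ urns are untouched. Write $g_i', o_i'$ for the counts of green and orange balls remaining in urn $i$ after discarding, so $g_i' = g_i$ and $o_i' = o_i$ for $i \notin R$, while $0 \le g_i' \le g_i$ and $0 \le o_i' \le o_i$ for $i \in R$. The hypothesis is $\sum_i g_i' > \sum_i o_i'$, and the goal is $r > \frac{b}{a+b} s$, equivalently $(a+b) r > b s$.

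The key step is to bound the surplus of green over orange from above. For each untouched urn ($i \notin R$), condition (ii) gives $g_i' - o_i' = g_i - o_i \le -b$, so untouched urns contribute a deficit of at least $b$ each. For each tampered urn ($i \in R$), we crudely bound $g_i' - o_i' \le g_i' \le g_i \le a$ using condition (i) (discarding can only help green's surplus, but it is capped by the initial green count, which is at most $a$). Summing over all urns,
\[
0 < \sum_{i=1}^s (g_i' - o_i') \le \sum_{i \in R} a + \sum_{i \notin R} (-b) = a r - b (s - r) = (a+b) r - b s.
\]
Rearranging yields $(a+b) r > b s$, i.e.\ $r > \frac{b}{a+b} s$, as desired.

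There is essentially no obstacle here — the argument is a one-line inequality chain once the right per-urn bounds are identified. The only thing to be careful about is the direction of the bound on tampered urns: one must resist the temptation to also use $o_i' \ge 0$ together with some lower bound on $o_i'$, since after discarding we have no control over $o_i'$ from below; the clean move is to simply drop $o_i'$ entirely and bound $g_i' - o_i' \le g_i'$, then invoke (i). It is also worth noting the argument does not use (iii) or any structure beyond (i) and (ii), and it works regardless of which balls within a tampered urn are discarded. If desired, one can observe the inequality is strict precisely because the hypothesis $\sum g_i' > \sum o_i'$ is strict, so no edge-case analysis at equality is needed.
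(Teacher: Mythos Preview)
Your proof is correct and follows essentially the same approach as the paper's: bound the green-minus-orange surplus by $a$ on each of the $r$ tampered urns and by $-b$ on each of the $s-r$ untouched urns, then sum to obtain $0 < ar - b(s-r)$. The paper's version is just a terser phrasing of exactly this inequality chain (note: your aside mentions a nonexistent hypothesis ``(iii)''---the lemma has only (i) and (ii)).
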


\begin{proof}
Each urn we leave alone incurs a deficit of at least $b$ balls, while each urn we ransack offers a surplus of at most $a$ balls.
If we end up with more green balls than orange balls, then
\[
ra
\geq\left(\begin{array}{c}\text{total surplus}\\\text{from ransacking}\end{array}\right)
>\left(\begin{array}{c}\text{total deficit}\\\text{from leaving alone}\end{array}\right)
\geq(s-r)b.
\qedhere
\]
\end{proof}

\begin{lemma}
\label{lem.squares per unit length}
Partition the plane into a grid of unit squares.
Then every curve of unit length intersects the interior of at most $4$ of these squares.
\end{lemma}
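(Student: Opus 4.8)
The plan is to bound the number of grid squares whose interiors a curve $\gamma$ of length $1$ can meet by tracking how the curve crosses the grid lines. First I would observe that each time the curve passes from the interior of one square to the interior of another (adjacent, i.e.\ sharing an edge) square, it must cross a grid line; and if it reaches a square that is diagonally adjacent, it still must have crossed at least one vertical and one horizontal grid line along the way. So the key quantity to control is the total number of grid-line crossings, or more precisely the span of the curve in each coordinate direction.

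The main step is a projection argument. Project $\gamma$ onto the $x$-axis: the image is an interval of length at most the total length of $\gamma$, namely at most $1$, so $\gamma$ is contained in a vertical strip of width at most $1$. Such a strip meets the interiors of squares from at most $2$ columns of the grid (a closed interval of length $1$ can straddle at most one interior grid line, hence touch at most two columns; if it touches a third, its length would exceed $1$). The same argument with the projection onto the $y$-axis shows $\gamma$ lies in a horizontal strip of height at most $1$, meeting at most $2$ rows. Therefore $\gamma$ can intersect the interiors of squares lying in the intersection of at most $2$ columns and at most $2$ rows, i.e.\ at most $2\times 2=4$ squares.

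I would then need to handle the boundary cases cleanly: the projection of $\gamma$ onto the $x$-axis is a closed interval $[p,q]$ with $q-p\le\operatorname{length}(\gamma)=1$, and I should note that if this interval contains two interior grid points $m<m+1$ in its interior, then $q-p>1$, a contradiction; the edge case where an endpoint of the interval lands exactly on a grid line is fine because we only count squares whose \emph{interior} the curve meets, and the curve's projection having an endpoint on a grid line contributes no interior points on the far side.

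The step I expect to be the (very mild) obstacle is making the ``at most two columns'' claim airtight, since one must be careful that a length-$1$ interval can indeed touch exactly two columns (e.g.\ $[0.5,1.5]$) but never three, and that the strict inequality in Lemma~\ref{lem.squares per unit length}'s hypothesis ``unit length'' versus the possibility of a degenerate or closed curve doesn't create an off-by-one issue. Once the strip-width bound is stated with the correct (closed) interval and interiors are used consistently, the $2\times2=4$ count follows immediately with no further computation.
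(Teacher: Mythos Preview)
Your proposal is correct and essentially the same as the paper's proof: the paper observes that the smallest axis-aligned rectangle containing the curve has side lengths at most $1$, so the curve fits inside a unit square, which meets the interiors of at most $4$ grid squares. Your projection argument is exactly this bounding-box observation phrased coordinatewise, and your $2\times 2$ column/row count is the same as the paper's ``unit square meets at most $4$ interiors''; the initial grid-line-crossing framing you sketch is unnecessary and you rightly set it aside.
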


\begin{proof}
The smallest axis-aligned rectangle that contains the curve has length at most $1$ and width at most $1$.
It follows that the curve is contained in a unit square, which in turn intersects the interior of at most $4$ of the grid's unit squares.
\end{proof}

\begin{lemma}
\label{lem.buffon}
Given a closed curve $C\subseteq\mathbb{R}^2$ with $\operatorname{length}(C)>1$ and a finite point set $P\subseteq C$ such that every unit-length subsegment of $C$ intersects $P$ in at most $t$ points, it holds that $|P|\leq t\cdot\operatorname{length}(C)$.
\end{lemma}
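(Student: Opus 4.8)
The plan is to slice the closed curve $C$ into consecutive arcs, each of length close to $1$, apply the hypothesis to each slice, and then sum. First I would fix an arc-length parametrization $\gamma\colon[0,L]\to\mathbb{R}^2$ of $C$, where $L:=\operatorname{length}(C)>1$, with $\gamma(0)=\gamma(L)$. Let $m:=\lfloor L\rfloor\geq 1$, and choose cut points at parameter values $0=s_0<s_1<\cdots<s_m=L$ with each $s_{i+1}-s_i\leq 1$; for instance, take $s_i:=iL/m$, so that each $s_{i+1}-s_i=L/m\leq 1$ since $m\geq L-1$... wait, that inequality needs $L/m\le 1$, i.e. $m\ge L$, which fails. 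Instead I would take $s_i := \min(i, L)$ for $i=0,\dots,m-1$ and $s_m:=L$, so that the first $m-1$ gaps have length exactly $1$ and the last gap $L-(m-1)\in(0,1]$ has length at most $1$. Each arc $C_i:=\gamma([s_i,s_{i+1}])$ then has length at most $1$, and by hypothesis (applied to a unit-length subsegment containing $C_i$, or directly to $C_i$ if one reads the hypothesis as applying to all subsegments of length at most $1$) it intersects $P$ in at most $t$ points.

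Next I would bound $|P|$ by the total count of these intersections. Since $\bigcup_{i=0}^{m-1} C_i = C \supseteq P$, every point of $P$ lies in at least one arc $C_i$, so
\[
|P| \le \sum_{i=0}^{m-1} |P\cap C_i| \le m\cdot t = \lfloor L\rfloor\cdot t \le t\cdot\operatorname{length}(C).
\]
This gives the claim.

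The main obstacle is the mild delicacy at the arc endpoints: a point of $P$ sitting exactly at a cut point $\gamma(s_i)$ gets counted in two adjacent arcs, so the naive union-bound overcounts but is still an upper bound, which is all we need — double-counting only helps the inequality direction we want. A second subtlety is the interpretation of the phrase ``every unit-length subsegment of $C$ intersects $P$ in at most $t$ points'': to apply it to an arc $C_i$ of length strictly less than $1$ (the last one, and possibly the degenerate case), I would extend $C_i$ along $C$ to a subsegment of length exactly $1$, which can only add points of $P$, so the bound $|P\cap C_i|\le t$ still holds. No hard analysis is required; the only care is in choosing the cut points so that all $m=\lfloor L\rfloor$ pieces have length at most $1$ while their union covers $C$.
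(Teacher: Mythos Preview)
Your slicing argument contains an arithmetic slip that actually breaks the proof. With $m=\lfloor L\rfloor$ and cut points $s_i=i$ for $i\le m-1$, $s_m=L$, the last gap has length
\[
L-(m-1)\;=\;L-\lfloor L\rfloor+1\;\in\;[1,2),
\]
not $(0,1]$ as you assert (e.g.\ $L=2.5$ gives $m=2$ and a final arc $[1,2.5]$ of length $1.5$). More to the point, it is \emph{impossible} to cover a curve of length $L$ by $\lfloor L\rfloor$ subarcs each of length at most $1$ unless $L\in\mathbb{Z}$, since the total length covered would be at most $\lfloor L\rfloor<L$. The natural repair---taking $m=\lceil L\rceil$ arcs---does produce a valid cover with all pieces of length at most $1$, but then you only conclude $|P|\le\lceil L\rceil\cdot t$, which does not imply $|P|\le t\cdot L$ when $L\notin\mathbb{Z}$. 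So the discrete chop-and-sum strategy cannot reach the stated constant.

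The paper sidesteps this with a one-line averaging (Buffon-style) argument: draw a unit-length subsegment $U\subseteq C$ uniformly at random; each fixed $p\in P$ lies in $U$ with probability exactly $1/L$, so by linearity $\mathbb{E}|U\cap P|=|P|/L$, while the hypothesis gives $|U\cap P|\le t$ for every $U$. Hence $|P|/L\le t$. Your cover is the discretisation of this expectation, and the rounding loss is exactly the $\lceil L\rceil$ versus $L$ gap you ran into; to recover the sharp bound combinatorially you would have to average over many overlapping unit arcs, which just reproduces the probabilistic proof.
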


\begin{proof}
Draw a unit-length subsegment $U\subseteq C$ uniformly at random.
Then $\frac{|P|}{\operatorname{length}(C)}=\mathbb{E}|U\cap P|\leq t$.
\end{proof}

\begin{lemma}
\label{lem.large set has large diameter}
Partition the plane into a grid of unit squares, and consider any point set $A\subseteq\mathbb{R}^2$ with at most $c$ points in each square.
Then every finite subset $B\subseteq A$ satisfies $\operatorname{diam}(B)\geq\sqrt{|B|/c}-2$.
\end{lemma}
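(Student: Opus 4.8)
The plan is to bound $|B|$ from above in terms of $\operatorname{diam}(B)$ by a packing argument, and then simply rearrange. Write $\Delta:=\operatorname{diam}(B)$, and assume $B\neq\emptyset$, the claim being vacuous otherwise. I would first observe, exactly as in the proof of Lemma~\ref{lem.squares per unit length}, that the smallest axis-aligned rectangle containing $B$ has both side lengths at most $\Delta$, since the distance between any two points of $B$ dominates the difference of their first coordinates and also the difference of their second coordinates. Hence $B$ is contained in some axis-aligned square $Q$ of side length $\Delta$.

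Next I would count how many of the grid's unit squares can meet $Q$. The vertical grid lines passing through the interior of $Q$ number at most $\lceil\Delta\rceil$, and likewise for the horizontal grid lines; together they cut $Q$ into at most $(\lceil\Delta\rceil+1)^2$ closed pieces, each contained in a single unit square of the grid. Assigning each point of $B$ to one piece containing it, and hence to one unit square, we see that $B$ is distributed among at most $(\lceil\Delta\rceil+1)^2\le(\Delta+2)^2$ unit squares; any ambiguity about which square a point lying on a grid line belongs to is harmless. Since $B\subseteq A$, each such square contains at most $c$ points of $B$, so
\[
|B|\le c\,(\Delta+2)^2.
\]

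Finally, rearranging gives $\Delta+2\ge\sqrt{|B|/c}$, that is, $\operatorname{diam}(B)\ge\sqrt{|B|/c}-2$, as claimed. I do not expect a genuine obstacle here; the proof is routine, and the only point needing a little care is the bookkeeping of how many unit squares a side-$\Delta$ square can overlap — in particular, making sure the additive constant $2$ (rather than $1$) correctly absorbs the partial overlaps at both ends of each coordinate direction.
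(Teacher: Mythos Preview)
Your proof is correct and follows essentially the same packing argument as the paper: both show that $B$ can occupy at most $(\operatorname{diam}(B)+2)^2$ grid squares and combine this with the bound of $c$ points per square. The only cosmetic difference is that the paper first applies pigeonhole to lower-bound the number of squares hit and then bounds the enclosing rectangle, whereas you first enclose $B$ in a side-$\Delta$ square and count the grid squares it overlaps; the resulting inequality $|B|\le c(\operatorname{diam}(B)+2)^2$ and the rearrangement are identical.
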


\begin{proof}
By the pigeonhole principle, $B$ hits at least $|B|/c$ of the grid's squares.
Consider the smallest axis-aligned rectangle that contains these squares, and let $\ell$ and $w$ denote its length and width.
Then
\[
\frac{|B|}{c}
\leq\ell w
\leq\max(\ell,w)^2
\leq(\operatorname{diam}(B)+2)^2.
\qedhere
\]
\end{proof}

Before proving Theorem~\ref{thm.model free}, we first introduce two more pieces of terminology.
First, the precincts whose interiors intersect the district are called \textbf{constituent precincts}.
Second, we say a constituent precinct is \textbf{split} if the district boundary intersects the interior of the precinct. 

\begin{proof}[Proof of Theorem~\ref{thm.model free}]
Consider any majority-minority district $D\subseteq S$.
We will work at the scale in which the precincts have unit area, and so
\[
\operatorname{PP}(D)
\leq4\pi\cdot\frac{\#(\text{ constituent precincts })}{\operatorname{length}(\partial D)^2}.
\]
Identify each constituent precinct with an urn of green and orange balls that correspond to its minority and majority populations, respectively.
We will use Lemma~\ref{lem.urn} to establish that a fraction of the constituent precincts must be split.
Note that each constituent precinct that is also a boundary precinct of the state is necessarily split, and we can encode this requirement by adding a large number of orange balls to the corresponding urn.
Overall, Lemma~\ref{lem.urn} implies
\[
\operatorname{PP}(D)
\leq4\pi\cdot\frac{a+b}{b}\cdot\frac{\#(\text{ split precincts })}{\operatorname{length}(\partial D)^2}.
\]
Select $P\subseteq \partial D$ consisting of a single point from the interior of each split precinct.
By Lemma~\ref{lem.squares per unit length}, every unit-length subsegment of $\partial D$ intersects at most $4$ of the points in $P$, and assuming $\operatorname{length}(\partial D)>1$, Lemma~\ref{lem.buffon} then gives $\#(\text{ split precincts })=|P|\leq 4\cdot \operatorname{length}(\partial D)$. 
Thus,
\[
\operatorname{PP}(D)
\leq16\pi\cdot\frac{a+b}{b}\cdot\frac{1}{\operatorname{length}(\partial D)},
\]
provided $\operatorname{length}(\partial D)>1$.
This bound also holds when $\operatorname{length}(\partial D)\leq1$ since $\operatorname{PP}(D)\leq1$ by the isoperimetric inequality.
Finally, let $B$ denote the set of all $d$ population locations in $D$.
Lemma~\ref{lem.large set has large diameter} implies $\operatorname{length}(\partial D)\geq 2\cdot\operatorname{diam}(B)\geq2\cdot\frac{1}{k}\cdot (\sqrt{d/c}-2)$, which is positive by assumption.
(The extra factor of $\frac{1}{k}$ follows from our unit-precinct scaling, which makes the census blocks have side length $\frac{1}{k}$.)
Combining with our previous bound gives the result.
\end{proof}

\section{Proof of Theorem~\ref{thm.main result}}

In this section, we show that for population locations drawn independently and uniformly at random, the hypotheses in Theorem~\ref{thm.model free} are satisfied with high probability for appropriate choices of $k\in\mathbb{N}$ and $a,b,c>0$.
We establish this by repeatedly applying the Chernoff bound:

\begin{proposition}
Fix $\delta\in(0,1)$.
Then $X\sim\mathrm{Binomial}(n,p)$ satisfies
\[
(1-\delta)np
\leq X
\leq (1+\delta)np
\]
with probability at least $1-2e^{-\delta^2np/3}$.
\end{proposition}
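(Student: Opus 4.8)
The plan is to invoke the Cram\'er--Chernoff exponential moment method, treating the upper and lower deviations separately and then combining them with a union bound. Write $X=\sum_{i=1}^{n}X_i$, where the $X_i$ are independent $\mathrm{Bernoulli}(p)$ random variables.

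For the upper tail, fix $t>0$ and apply Markov's inequality to $e^{tX}$. Using independence together with $1+x\leq e^{x}$,
\[
\mathbb{P}\big(X\geq(1+\delta)np\big)\leq e^{-t(1+\delta)np}\,\mathbb{E}\big[e^{tX}\big]=e^{-t(1+\delta)np}\big(1-p+pe^{t}\big)^{n}\leq\exp\!\big(np(e^{t}-1)-t(1+\delta)np\big).
\]
Minimizing the exponent over $t>0$, which occurs at $t=\log(1+\delta)$, gives the bound $\big(e^{\delta}(1+\delta)^{-(1+\delta)}\big)^{np}$. It then suffices to verify the scalar inequality $\delta-(1+\delta)\log(1+\delta)\leq-\delta^{2}/3$ for $\delta\in(0,1)$, which upgrades the estimate to $\mathbb{P}\big(X\geq(1+\delta)np\big)\leq e^{-\delta^{2}np/3}$.

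For the lower tail I would run the mirror-image computation: bound $\mathbb{E}[e^{-tX}]$ for $t>0$ and optimize at $t=-\log(1-\delta)$, obtaining $\mathbb{P}\big(X\leq(1-\delta)np\big)\leq\big(e^{-\delta}(1-\delta)^{-(1-\delta)}\big)^{np}$; here one checks $-\delta-(1-\delta)\log(1-\delta)\leq-\delta^{2}/2\leq-\delta^{2}/3$ on $(0,1)$. Adding the two tail probabilities yields the claimed failure probability $2e^{-\delta^{2}np/3}$.

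The only real content is the two one-variable inequalities bounding the log-moment-generating functions, and each is a short calculus check rather than a genuine obstacle. For the upper tail, set $f(\delta)=\delta-(1+\delta)\log(1+\delta)+\delta^{2}/3$; then $f(0)=0$ and $f'(\delta)=\tfrac{2\delta}{3}-\log(1+\delta)$, which is nonpositive on $[0,1]$ because $\log(1+\delta)\geq\tfrac{2\delta}{3}$ there (the difference vanishes at $\delta=0$, increases, and is still positive at $\delta=1$ since $\log 2>\tfrac23$), so $f\leq0$ on $[0,1]$. The lower-tail inequality is even simpler, following from $\log(1-\delta)\leq-\delta$. Since this Proposition is entirely standard, citing a textbook would be an acceptable alternative; I prefer the self-contained argument above since it is only a handful of lines.
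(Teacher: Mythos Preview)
Your argument is correct and is the standard Cram\'er--Chernoff derivation. The paper itself does not prove this proposition at all: it is stated as a known tool and then applied, so there is no paper proof to compare against. Your closing remark that a textbook citation would suffice matches what the authors actually do.

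One small imprecision: in verifying $\log(1+\delta)\geq\tfrac{2\delta}{3}$ on $[0,1]$, you write that the difference ``vanishes at $\delta=0$, increases, and is still positive at $\delta=1$.'' In fact the difference $g(\delta)=\log(1+\delta)-\tfrac{2\delta}{3}$ satisfies $g'(\delta)=\tfrac{1}{1+\delta}-\tfrac{2}{3}$, which changes sign at $\delta=\tfrac12$, so $g$ increases then decreases. The conclusion still holds because $g$ is unimodal with $g(0)=0$ and $g(1)=\log 2-\tfrac{2}{3}>0$, but your wording should reflect this. Everything else is clean.
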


First, we select the sizes of the state, the precincts, and the census blocks.
Without loss of generality, we scale $S$ to have
\[
\operatorname{area}(S)=\frac{n}{\log^4n},
\]
and we work with precincts of unit area.
Following the proof of Theorem~\ref{thm.model free}, we may combine Lemmas~\ref{lem.squares per unit length} and~\ref{lem.buffon} to see that the state has at most $4\operatorname{length}(\partial S)$ boundary precincts, provided $\operatorname{length}(\partial S)>1$.
Then the total number $m$ of precincts whose interior intersects the state satisfies
\[
m
\leq\operatorname{area}(S)+4\operatorname{length}(\partial S)
=\frac{n}{\log^4 n}+4\sqrt{\frac{4 \pi n}{\kappa\log^4 n}}
=(1+o_{\kappa}(1))\cdot \frac{n}{\log^4 n}.
\]
Letting $m_0$ denote the number of inner precincts in $S$, we have the bounds
\[
\underbrace{\operatorname{area}(S)-4\operatorname{length}(\partial S)}_{(1-o_{\kappa}(1))\cdot \frac{n}{\log^4 n}}
\leq m_0
\leq \underbrace{\operatorname{area}(S)}_{\frac{n}{\log^4 n}}
\leq m
\leq \underbrace{\operatorname{area}(S)+4\operatorname{length}(\partial S)}_{(1+o_{\kappa}(1))\cdot \frac{n}{\log^4 n}}.
\]
Finally, we take the census blocks to have edge length $\frac{1}{k}$, where
\[
k
:=\bigg\lfloor\sqrt{\frac{n}{12\operatorname{area}(S)\log n}}\bigg\rfloor
=(1-o(1))\cdot\sqrt{\frac{\log^3 n}{12}}.
\]

We may now identify choices for $a$, $b$, and $c$ that satisfy the hypotheses in Theorem~\ref{thm.model free} with high probability.
We start by identifying $c$.
The population $X_i$ in the $i$th census block has distribution $\operatorname{Binomial}(n,p_i)$ for some
\[
p_i
\leq p
:=\frac{1}{k^2\operatorname{area}(S)}
=(1+o(1))\cdot\frac{12\log n}{n}.
\]
Since $X_i$ is stochastically dominated by $X\sim\operatorname{Binomial}(n,p)$, the Chernoff bound gives
\[
\mathbb{P}\Big\{X_i>\frac{3}{2}np\Big\}
\leq \mathbb{P}\Big\{X>\frac{3}{2}np\Big\}
\leq 2e^{-np/12}
\leq\frac{2}{n},
\]
and then applying the union bound over $mk^2$ census blocks gives that
\[
\max_iX_i
\leq \frac{3}{2}np
=18\log n
=:c
\]
with probability at least $1-mk^2\cdot\frac{2}{n}=1-o_{\kappa}(1)$.

Next, we identify $a$.
The minority population $Y_i$ in the $i$th precinct has distribution $\operatorname{Binomial}(\alpha n,q_i)$ for some
\[
q_i
\leq q
:=\frac{1}{\operatorname{area}(S)}
=\frac{\log^4n}{n}.
\]
Since $Y_i$ is stochastically dominated by $Y\sim\operatorname{Binomial}(\alpha n,q)$, the Chernoff bound gives
\[
\mathbb{P}\Big\{Y_i>\Big(1+\frac{1}{\log n}\Big)\alpha nq\Big\}
\leq \mathbb{P}\Big\{Y>\Big(1+\frac{1}{\log n}\Big)\alpha nq\Big\}
\leq 2e^{-\alpha nq/(3\log^2n)}
=2e^{-(\alpha/3)\log^2n},
\]
and then applying the union bound over $m$ precincts gives that
\[
\max_i Y_i
\leq \Big(1+\frac{1}{\log n}\Big)\alpha nq
= \Big(1+\frac{1}{\log n}\Big)\alpha\log^4n
=:a
\]
with probability at least $1-m\cdot2e^{-(\alpha/3)\log^2n}=1-o_{\kappa,\alpha}(1)$.

Finally, we identify $b$.
The majority population $Z_i$ in the $i$th inner precinct has distribution $\operatorname{Binomial}((1-\alpha)n,q)$.
Applying the Chernoff bound and the fact that $\alpha<\frac{1}{2}$ gives
\[
\mathbb{P}\Big\{Z_i<\Big(1-\frac{1}{\log n}\Big)(1-\alpha) nq\Big\}
\leq 2e^{-(1-\alpha) nq/(3\log^2n)}
\leq2e^{-\frac{1}{6}\log^2n},
\]
and then applying the union bound over $m_0$ inner precincts gives that
\[
\min_i Z_i
\geq \Big(1-\frac{1}{\log n}\Big)(1-\alpha) nq
= \Big(1-\frac{1}{\log n}\Big)(1-\alpha)\log^4n
\]
with probability at least $1-m_0\cdot2e^{-\frac{1}{6}\log^2n}=1-o(1)$.
This suggests we take
\[
b
:=\Big(1-\frac{1}{\log n}\Big)(1-\alpha)\log^4n-\Big(1+\frac{1}{\log n}\Big)\alpha\log^4n
=(1-2\alpha)\log^4n-\log^3n.
\]

Theorem~\ref{thm.main result} then follows from taking a union bound, applying Theorem~\ref{thm.model free}, and then making straightforward simplifications to the resulting bound.

\section{Discussion}

We conclude with some comments on Theorem~\ref{thm.main result}.
First, our theorem is not terribly sensitive to the precise probabilistic model.
For example, the population distribution need not be uniform.
Second, while we wrote our theorem in terms of the Polsby--Popper score, we expect a version of this result to hold for other notions of geographic compactness as well.
At its core, our proof establishes that the boundary of a majority-minority district is forced to split a constant fraction of the precincts, thereby essentially acting as a space-filling curve.
Meanwhile, any proper notion of geographic compactness should be able to detect a district that resembles a fractal.

\section*{Acknowledgments}

DGM was supported in part by NSF DMS 2220304.

\end{document}